\numberwithin{equation}{section}
\newtheorem{thm}{Theorem}[section]
\newtheorem{cor}[thm]{Corollary}
\theoremstyle{remark}
\theoremstyle{definition}
\newcommand{\pd}{\partial}
\newcommand{\s}{\sigma}
\newcommand{\sk}[2]{\langle #1 ,\, #2 \rangle}
\newcommand{\Sk}[2]{\left \langle #1 ,\, #2 \right \rangle}
\newcommand{\nor}[1]{\| #1 \|}
\newcommand{\NOR}[1]{\Big |\hskip -0.7pt\Big | #1 \Big |\hskip
-0.7pt\Big |}
\newcommand{\C}{\mathbb{C}}
\newcommand{\R}{\mathbb{R}}
\newcommand{\cE}{\mathcal{E}}
\newcommand{\cV}{\mathcal{V}}
\newcommand\spn{\operatorname{span}}
\begin{document}

\title{A theorem about three quadratic forms}

\author{Oliver Dragi\v cevi\'c, Sergei Treil, Alexander Volberg}
\address{Institute of Mathematics, Physics and Mechanics, University of Ljubljana, Slovenia}
\email{oliver.dragicevic@fmf.uni-lj.si}

\address{Department of Mathematics, Brown University, 151 Thayer
Str., Box 1917,
 Providence, RI  02912, USA }
\email{treil@math.brown.edu}
\urladdr{http://www.math.brown.edu/\~{}treil}

\address{Department of Mathematics, Michigan State University, East Lansing, MI. 48824, USA; and School of Mathematics, University of Edinburgh, Edinburgh EH9 3JZ, UK }
\email{volberg@math.msu.edu}
\urladdr{http://www.math.msu.edu/\~{}volberg}

\date{March, 2007}

%

\maketitle

\setcounter{tocdepth}{1}



\section{A funny theorem}
\begin{thm}
\label{tq1.1}
Let $\s_0, \s_1, \s_2$ be non-negative non-zero quadratic forms on a vector  space $\cV$ (real or complex) satisfying
\begin{equation}
\label{q1.1}
\s_0[x] \ge 2\sqrt{\s_1[x]\cdot \s_2[x]}, \qquad \forall x 
\in \cV.
\end{equation}
Then there exists a constant $\alpha >0$ such that 
\begin{equation}
\label{q1.2}
\s_0[x] \ge \alpha \s_1[x] + \frac1{\alpha} \s_2[x], \  \forall x 
\in \cV .
\end{equation}
\end{thm}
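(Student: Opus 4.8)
The plan is to reduce to the case $\dim\cV<\infty$ and there to study the scalar parameter $\alpha$ through the function $\phi(\alpha):=\la_{\min}(S_0-\alpha S_1-\alpha^{-1}S_2)$, where $S_i$ are the self-adjoint positive operators representing $\s_i$ in some inner product. Call $\alpha>0$ \emph{good} if $\s_0[x]\ge\alpha\s_1[x]+\alpha^{-1}\s_2[x]$ for all $x\in\cV$; we must produce one good $\alpha$. Short perturbation arguments from \eqref{q1.1} and from $\s_1,\s_2\neq0$ first yield: a vector $v$ with $\s_1[v],\s_2[v]>0$ (hence also $\s_0[v]>0$), and the fact that $\s_0[x]>0$ whenever exactly one of $\s_1[x],\s_2[x]$ vanishes. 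From these facts, for every $x$ the set $I_x:=\{\alpha>0:\s_0[x]\ge\alpha\s_1[x]+\alpha^{-1}\s_2[x]\}$ is a nonempty closed subset of $(0,\infty)$ (nonemptiness because $\inf_{\alpha>0}(\alpha\s_1[x]+\alpha^{-1}\s_2[x])=2\sqrt{\s_1[x]\s_2[x]}\le\s_0[x]$ by \eqref{q1.1}), and $I_v$ is compact. For each finite-dimensional subspace $W\ni v$, the restrictions $\s_i|_W$ are non-negative, non-zero, and satisfy \eqref{q1.1}; so if the theorem holds in finite dimensions, $A_W:=\bigcap_{x\in W}I_x$ is a nonempty compact subset of $I_v$, and $A_{W_1}\cap\dots\cap A_{W_k}\supseteq A_{W_1+\dots+W_k}\neq\emptyset$. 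Compactness of $I_v$ then yields $\bigcap_W A_W\neq\emptyset$, and any element of it is good on $\cV=\bigcup_W W$. So from now on $\dim\cV<\infty$.

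In finite dimensions, it suffices to find $\alpha_0$ with $\phi(\alpha_0)\ge0$, since then $S_0-\alpha_0 S_1-\alpha_0^{-1}S_2\succeq0$, i.e.\ \eqref{q1.2} holds with $\alpha=\alpha_0$. Writing $\phi(\alpha)=\alpha^{-1}\la_{\min}(\alpha S_0-\alpha^2S_1-S_2)$ and using $\la_{\min}(-S_2)=-\la_{\max}(S_2)<0$ shows $\phi(\alpha)\to-\infty$ as $\alpha\to0^{+}$, and symmetrically (using $S_1\neq0$) as $\alpha\to\infty$; since $\phi$ is continuous, it attains its maximum at some $\alpha_0\in(0,\infty)$.

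Suppose, for contradiction, that $\phi(\alpha_0)<0$. Put $M:=S_0-\alpha_0 S_1-\alpha_0^{-1}S_2$, let $E=\ker(M-\phi(\alpha_0)I)$ be its bottom eigenspace, and put $M':=-S_1+\alpha_0^{-2}S_2$ (the $\alpha$-derivative of $S_0-\alpha S_1-\alpha^{-1}S_2$ at $\alpha_0$). Being the pointwise minimum over unit vectors $x$ of the smooth functions $\alpha\mapsto\langle(S_0-\alpha S_1-\alpha^{-1}S_2)x,x\rangle$, the function $\phi$ has one-sided derivatives at $\alpha_0$ (Danskin's formula): $\phi'_+(\alpha_0)=\min_{x\in E,\,\|x\|=1}\langle M'x,x\rangle$ and $\phi'_-(\alpha_0)=\max_{x\in E,\,\|x\|=1}\langle M'x,x\rangle$. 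Maximality of $\phi$ at $\alpha_0$ forces $\phi'_+(\alpha_0)\le0\le\phi'_-(\alpha_0)$. Since $x\mapsto\langle M'x,x\rangle$ is continuous on the unit sphere of $E$ --- connected when $\dim_{\R}E\ge2$, and the claim being immediate when $\dim_{\R}E=1$ --- there is a unit $x_*\in E$ with $\langle M'x_*,x_*\rangle=0$, i.e.\ $\alpha_0\s_1[x_*]=\alpha_0^{-1}\s_2[x_*]$. On the other hand, every unit $x\in E$ satisfies $Mx=\phi(\alpha_0)x$, so by \eqref{q1.1},
\[
\phi(\alpha_0)=\s_0[x]-\alpha_0\s_1[x]-\alpha_0^{-1}\s_2[x]\ \ge\ 2\sqrt{\s_1[x]\s_2[x]}-\alpha_0\s_1[x]-\alpha_0^{-1}\s_2[x]\ =\ -\bigl(\sqrt{\alpha_0\s_1[x]}-\sqrt{\alpha_0^{-1}\s_2[x]}\bigr)^{2},
\]
hence $\bigl(\sqrt{\alpha_0\s_1[x]}-\sqrt{\alpha_0^{-1}\s_2[x]}\bigr)^{2}\ge-\phi(\alpha_0)>0$ for \emph{every} unit $x\in E$; applying this to $x=x_*$ contradicts $\alpha_0\s_1[x_*]=\alpha_0^{-1}\s_2[x_*]$. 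Therefore $\phi(\alpha_0)\ge0$, which proves the theorem.

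I expect the main obstacle to be the last step. The naive, pointwise choice $\alpha=\sqrt{\s_2[x]/\s_1[x]}$ cannot be made uniform in $x$; what makes the proof work is that at the extremal $\alpha_0$ the bottom eigenspace must simultaneously contain a vector ``balanced'' for $\alpha_0$ (forced by the maximality of $\phi$, via the first-order condition) and consist entirely of vectors that are ``strictly unbalanced'' (forced by $\phi(\alpha_0)<0$ together with \eqref{q1.1}) --- which is impossible. The points needing genuine care are the eigenvalue-perturbation bookkeeping (the Danskin derivative formula, and the degenerate case of a one-dimensional real bottom eigenspace, where the unit sphere is disconnected) and the finite-dimensional reduction in the first paragraph; the rest is the AM--GM inequality.
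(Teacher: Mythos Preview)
Your argument is correct and follows a genuinely different route from the paper's. The paper argues by contradiction via a \emph{topological} device: assuming no $\alpha$ works, it studies the set $S\subset(0,\infty)^2$ of pairs $(s,t)$ for which $\s^s[x]<0$ at some $x$ with $\s_2[x]/\s_1[x]=t^2$, proves $S$ is connected (its vertical slices are intervals and its projection to the $s$-axis is all of $(0,\infty)$), and observes that $S$ meets both sides of the diagonal, hence the diagonal itself---which contradicts \eqref{q1.1}. Your proof is \emph{variational}: after reducing to finite dimensions by a finite-intersection-property argument, you maximize $\phi(\alpha)=\la_{\min}(S_0-\alpha S_1-\alpha^{-1}S_2)$ and use the first-order condition at the maximizer (Danskin's formula for one-sided derivatives) to locate in the bottom eigenspace a unit vector $x_*$ with $\alpha_0\s_1[x_*]=\alpha_0^{-1}\s_2[x_*]$; the hypothesis \eqref{q1.1} then forces $\phi(\alpha_0)\ge0$. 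What the paper's approach buys is that it works uniformly in any dimension without choosing an inner product or invoking eigenvalue perturbation theory, and it is entirely elementary. What your approach buys is a sharper structural picture: the good $\alpha$ is pinned down as the maximizer of a concrete spectral function, and the obstruction to failure is exactly the ``balanced'' eigenvector forced by optimality. Two small points worth tightening in a final write-up: the perturbation claim that $\s_0[x]>0$ whenever exactly one of $\s_1[x],\s_2[x]$ vanishes deserves a line (it follows by expanding $\s_j[x+tv]$ using that $x$ lies in the kernels of $b_0$ and of the vanishing $b_k$, then letting $t\to0$ in \eqref{q1.1}); and your parenthetical justification of $I_x\neq\varnothing$ via the infimum does not literally cover the case where one of $\s_1[x],\s_2[x]$ is zero---that is precisely where the perturbation claim is used.
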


The condition \eqref{q1.2} clearly implies \eqref{q1.1}. 

\begin{proof}[Proof of Theorem \ref{tq1.1}]
Consider the family of quadratic forms $\s^s$,  $s\in (0, \infty)$
$$
\s^s := \s_0 - s \s_1 -s^{-1} \s_2. 
$$
Assume that the theorem is not true, i.e.~that for all $s$ the form  $\s^s$ is not
non-negative to get a contradiction.

First of all let us notice, that if for some $x\in \cV$ we have $\s_2[x] /\s_1[x] = s^2$ then 
$$
s \s_1[x] + s^{-1} \s_2[x] = 2\sqrt{\s_1[x]\cdot \s_2[x]}. 
$$
Therefore, if we find $s>0$ such that 
$$
\s^s[x] <0
$$
for some $x$ satisfying $\s_2[x] /\s_1[x] = s^2$, we get a contradiction with
\eqref{q1.1}.

Consider the set $S\subset (0, \infty)\times (0, \infty)$ consisting of all pairs
$(s, t)$ such that $\s^s[x]<0$ for some $x$ satisfying $\s_2[x] /\s_1[x] = t^2$. 
We want to show that $(\alpha, \alpha)\in S$ for some $\alpha>0$, which gives us
the contradiction.

Since all forms are non-zero, there exist vectors $x_k$, $k=1, 2$ such that $\s_k[x_k]>0$, $k=1, 2$. Therefore there exists a linear combination $x=\alpha x_1 + \beta x_2$ such that $\s_1[x], \s_2[x]>0$ (to see that one only needs to consider forms on two-dimensional space $\mathcal L\{x_1, x_2\}$). 
One
concludes that $\s^s[x]<0$ for all sufficiently large and for all sufficiently small $s$. So, if $t_0:= \sqrt{\s_2[x]/\s_1[x]}$, the points $(s, t_0)$ belong to $S$ for all sufficiently small and for all sufficiently large $s$. Thus $S$ has points on both sides of the line $s=t_0$. So, if we prove that the set $S$ is connected, it must contain a point $(\alpha, \alpha)$, which gives us the desired contradiction.

We now prove the following properties of the set $S$:

\begin{enumerate}
\item For any $(s_0, t_0)\in S$ we have $(s, t_0)\in S$ for all $s$ in a small neighborhood of $s_0$;
\item Projection of $S$ onto the $s$-axis is the whole ray $(0, \infty)$;
\item For any $s\in (0, \infty)$ the set $\{t : (s, t)\in S\}$ is an  interval;
\item $S$ is connected. 
\end{enumerate}
The property 1 follows immediately from the continuity of the function $s\mapsto \s^s[x]$ ($x$ is fixed). 

The property 2 is just our assumption that $\s^s$ is never positive semi-definite. 

Property 3 requires some work. Let $s$ be fixed. Suppose that $(s,t_k)\in S$, $k=1, 2$, i.e.~that there exist vectors $x_{1}, x_2\in \cV$ such that $\s^s [x_k]<0$ and $t_k =\sqrt{\s_2[x_k]/\s_1[x_k]}$, $k=1, 2$. Consider the (real) subspace $\cE\subset \cV$, $\cE=\spn_{\text{real}}\{x_1, x_2\}$, and let us restrict all quadratic forms onto $\cE$.  

For a vector $x\in \cE$ satisfying
$\s^s[x]<0$ define 
$$
\tau(x) := \sqrt{\s_2[x]/\s_1[x]}. 
$$
Notice, that by the definition of $\s^s$  for any  $x$ satisfying $\s^s[x]<0$ both $\s_1[x]$ and $\s_2[x]$
 cannot be simultaneously $0$, so $\tau: \{x\in \cE: \s^s[x]<0\} \to [0, \infty]$ is a well defined continuous map (we are allowing $\tau(x)=+\infty$).

Since $\s^s[x_k]<0$ the quadratic form $\s^s\bigm| \cE$ has either one or two negative squares. In the latter case the set $K=\{x\in \cE: \s^s[x]<0\}$ is the whole plane without the origin, so it is connected. In the former case it consists of two connected parts $K=K_{1}\cup K_2$, $K_1=-K_2$.  In both cases the set $\tau(K)$ is connected. Indeed, if $K$ is connected, $\tau(K)$ is  a continuous image of a connected set.  In the second case, $\tau(K_1) $ is connected and since $\tau(x)=\tau(-x)$ we have $\tau(K_1) =\tau(K_2) =\tau(K)$. So the set $\tau(K)$ contains the whole interval between the points $t_1$ and $t_2$. 

Since $\tau(K)\cap (0,\infty)\subset\{t: (s, t)\in S\}$ we can conclude that for arbitrary $t_1, t_2\in \{t: (s, t)\in S\}$, $t_1<t_2$, the whole interval $[t_1, t_2]$ belongs to the set. But that exactly means that the set $\{t: (s, t)\in S\}$ is an interval.

And now let us prove property 4 (and so the theorem). Suppose we split $S$ into 2
nonempty disjoint relatively open subsets $S=S_1\cup S_2$. Let $P$ denote the coordinate projection onto the
$s$-axis. Property 1 implies that  the sets $PS_1$, $PS_2$ are open. Property 2
implies that $PS_1\cup PS_2 =(0, \infty)$ so it follows from the connectedness
of $(0, \infty)$ that $PS_1\cap PS_2 \ne \varnothing$.

Therefore for some $s$ there exist $t_1, t_2$ such that $(s, t_k)\in S_k$, $k=1,
2$. By property 3 the whole interval $J = \{ (s, \theta t_1 + (1-\theta)t_2):
\theta \in [0,1]\}$ belongs to $S$. Therefore $J$ can be represented as a union
$J= (J\cap S_1)\cup (J\cap S_2)$ of disjoint nonempty relatively open subsets, which is impossible.

\end{proof}

\section{Consequences. The claim that goes ``against" geometric average of arithmetic average versus arithmetic average of geometric average inequality.}

We list several obvious corollaries of the previous result. Namely, Theorem \ref{lelo} follows immediately from the previous result, Theorem \ref{lelo1} is an obvious consequence of  Theorem \ref{lelo}, and Theorem \ref{lelo2} is a particular case of Theorem \ref{lelo1}. 
Theorem \ref{lelo2} was proved in \cite{DV1}, \cite{DV} (and preprints preceding  these papers). The proof was basically geometric. Actually the same proof can give more general Theorem \ref{lelo}, and even our main result. But the proof of the main result given above is easier, more concise, and is better suited for the general situation.

Then we explain how  Theorem \ref{lelo2} finds the application for the dimension free estimates of singular operators (Riesz transforms of various kind).

\begin{thm}
\label{lelo}
Suppose ${\mathcal H}$ is a (real or complex) Hilbert
space, $A,B$ are two positive-definite operators on ${\mathcal H}$. Let $T$ be a self-adjoint operator on ${\mathcal H}$ such that 
\begin{equation}
\label{Joe}
\langle Th, h\rangle\geqslant 2(\sk{Ah}{h}\sk{Bh}{h})^{\frac12}
\end{equation}
for all $h\in {\mathcal H}$. Then there exists $\tau >0$, satisfying
\begin{equation*}
\sk{Th}{h}\geqslant \tau \sk{Ah}{h}+\tau^{-1}
\sk{Bh}{h}\,
\end{equation*}
again for all $h\in {\mathcal H}$. 
\end{thm}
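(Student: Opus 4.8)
The plan is to derive Theorem \ref{lelo} directly from Theorem \ref{tq1.1} by recognizing that the three expressions in \eqref{Joe} are quadratic forms in $h$. Define $\s_0[h] := \sk{Th}{h}$, $\s_1[h] := \sk{Ah}{h}$ and $\s_2[h] := \sk{Bh}{h}$ on the vector space $\cV := {\mathcal H}$. Since $A$ and $B$ are positive-definite, $\s_1$ and $\s_2$ are non-negative; they are non-zero because a positive-definite operator on a non-trivial space is non-zero. The hypothesis \eqref{Joe} says precisely that $\s_0[h] \ge 2\sqrt{\s_1[h]\,\s_2[h]}$ for all $h$, which is \eqref{q1.1}. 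One should also note that $\s_0$ is non-negative: taking square roots in \eqref{Joe} already forces $\sk{Th}{h}\ge 0$ for every $h$ (the right-hand side is non-negative), and $\s_0$ is non-zero since otherwise \eqref{Joe} would force $\sk{Ah}{h}\sk{Bh}{h}=0$ for all $h$, contradicting positive-definiteness of $A$ and $B$ (pick any $h\neq 0$).

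With these identifications, Theorem \ref{tq1.1} applies verbatim and yields a constant $\alpha>0$ with $\s_0[h]\ge \alpha\,\s_1[h] + \alpha^{-1}\,\s_2[h]$ for all $h\in\cV$. Rewriting this in operator-inner-product notation gives
\begin{equation*}
\sk{Th}{h} \ge \alpha \sk{Ah}{h} + \alpha^{-1}\sk{Bh}{h}, \qquad \forall h\in{\mathcal H},
\end{equation*}
so setting $\tau := \alpha$ completes the proof. The self-adjointness of $T$ and the positivity of $A,B$ are used only to guarantee that the three expressions are genuine real-valued quadratic forms with the sign properties required by Theorem \ref{tq1.1}; no further structure of the Hilbert space is needed, and in particular the argument is insensitive to whether ${\mathcal H}$ is finite- or infinite-dimensional since Theorem \ref{tq1.1} is stated for an arbitrary vector space.

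I do not expect any serious obstacle here: the entire content is the observation that Theorem \ref{lelo} is the specialization of Theorem \ref{tq1.1} to the vector space ${\mathcal H}$ with the three quadratic forms coming from the operators $T$, $A$, $B$. The only point requiring a line of care is verifying the non-negativity and non-triviality hypotheses of Theorem \ref{tq1.1} — in particular that $\s_0$ is non-negative and non-zero — but, as indicated above, both follow immediately from \eqref{Joe} together with the positive-definiteness of $A$ and $B$. Thus the proof is essentially a two-line reduction.
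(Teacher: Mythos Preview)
Your proposal is correct and matches the paper's approach exactly: the paper simply states that Theorem~\ref{lelo} ``follows immediately'' from Theorem~\ref{tq1.1}, and your write-up spells out precisely this reduction, including the routine verification that $\s_0,\s_1,\s_2$ satisfy the non-negativity and non-triviality hypotheses of Theorem~\ref{tq1.1}.
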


\begin{thm}
\label{lelo1}
Suppose ${\mathcal H}$ is a (real or complex) Hilbert
space, $A,B$ are two positive-definite operators on ${\mathcal H}$. Let $T$ be a self-adjoint operator on ${\mathcal H}$ such that 
\begin{equation}
\label{Joe1}
\langle Th, h\rangle\geqslant 2(\sk{Ah}{h}\sk{Bh}{h})^{\frac12}
\end{equation}
for all $h\in {\mathcal H}$. Then
\begin{equation*}
\text{trace}\, T\geqslant 2 (\text{trace} \,A\,\text{trace}\, B)^{\frac12}\,.
\end{equation*}
 \end{thm}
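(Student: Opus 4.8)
The plan is to deduce Theorem~\ref{lelo1} directly from Theorem~\ref{lelo} by choosing $\tau$ appropriately and then taking traces. First I would invoke Theorem~\ref{lelo}: hypothesis \eqref{Joe1} is exactly \eqref{Joe}, so there is a $\tau>0$ with
\begin{equation*}
\sk{Th}{h}\geqslant \tau\sk{Ah}{h}+\tau^{-1}\sk{Bh}{h}\qquad\text{for all }h\in\cH.
\end{equation*}
Fix an orthonormal basis $\{e_i\}$ of $\cH$ (working in finite dimensions, or assuming $A,B,T$ are trace class so that the relevant series converge). Applying the inequality to $h=e_i$ and summing over $i$ gives $\tr T\geqslant \tau\,\tr A+\tau^{-1}\,\tr B$, since $\sum_i\sk{Ae_i}{e_i}=\tr A$ and likewise for $B$ and $T$.

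It then remains to estimate the right-hand side from below. By the arithmetic–geometric mean inequality, $\tau\,\tr A+\tau^{-1}\,\tr B\geqslant 2\sqrt{\tr A\cdot\tr B}$, which is precisely the claimed bound
\begin{equation*}
\tr T\geqslant 2(\tr A\cdot\tr B)^{1/2}.
\end{equation*}
This closes the argument. Note that the particular value of $\tau$ produced by Theorem~\ref{lelo} is irrelevant here: the AM–GM step discards it. (If one wanted the sharp constant $\tau_0=\sqrt{\tr B/\tr A}$ in the intermediate inequality one would have to re-optimize, but that is not needed.)

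The only genuine subtlety — and the main point requiring care rather than a true obstacle — is the summation step: interchanging the sum over basis vectors with the inequality is immediate, but one must ensure $\tr A$, $\tr B$, $\tr T$ are well defined. In finite dimensions this is automatic. In the infinite-dimensional case, if $A$ or $B$ fails to be trace class the right-hand side is $+\infty$ and, via the established operator inequality together with $T\geqslant \tau A$ (or the diagonal sum), $\tr T=+\infty$ as well, so the inequality holds trivially; if both are trace class, the inequality $0\le \tau A+\tau^{-1}B\le T$ forces $T$ to be trace class too, and the termwise summation is justified by monotone convergence applied to the nonnegative sequences $\sk{Ae_i}{e_i}$, $\sk{Be_i}{e_i}$. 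Thus the passage to traces is legitimate in all cases, and the theorem follows.
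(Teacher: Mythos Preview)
Your argument is correct and is exactly the route the paper takes: it says explicitly that Theorem~\ref{lelo1} is an obvious consequence of Theorem~\ref{lelo}, and your deduction (apply Theorem~\ref{lelo}, pass to traces via an orthonormal basis, then use AM--GM) is precisely how that implication goes. Your discussion of the trace-class issue in infinite dimensions is a nice extra bit of care that the paper does not spell out.
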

 
 \noindent{\bf Remark.} If we have two sequences of positive numbers
 $\{a_i\}_{i=1}^n$, $\{b_i\}_{i=1}^n$, then 
 $$\frac {(\sum a_i\sum b_i)^{\frac12}}{n}\geq\frac{\sum (a_i b_i)^{\frac12}}{n}\,.$$ 
 This is just to say that the geometric average of arithmetic averages is always greater than the arithmetic average of geometric averages. Now let us have operators $T,A,B$ as above, and any system $\{e_i\}_{i=1}^n$ of vectors in ${\mathcal H}$.
 Put $a_i = \langle Ae_i, e_i\rangle,\, b_i = \langle Be_i, e_i\rangle$ and $t_i = \langle Te_i, e_i\rangle$/2. Denote  
$A(G)=\frac{\sum (a_i b_i)^{\frac12}}{n}$ and $G(A)=\frac {(\sum a_i\sum b_i)^{\frac12}}{n}.$ We know that $G(A)\geq A(G)$ always. 
 We are given that
 $t_i \geq (a_i b_i)^{\frac12}$. So of course $\frac{\sum t_i}{n} \geq A(G)$. But for this {\it special collections of numbers} we have more:
 $$
 \frac{\sum t_i}{n} \geq G(A)\,.
 $$

\begin{thm}
\label{lelo2}
Suppose ${\mathcal H}$ is a (real or complex) finite-dimensional Hilbert
space, ${\mathcal H}_j$, $j=1,2$, are two nontrivial mutually orthogonal subspaces of\, ${\mathcal H}$ and $P_j$ are the corresponding orthogonal projections. Let $T$ be a self-adjoint operator on ${\mathcal H}$ such that 
\begin{equation}
\label{Joe2}
\sk{Th}{h}\geqslant 2\nor{P_1h}\nor{P_2h}
\end{equation}
for all $h\in {\mathcal H}$. Let also $L$ be a Hilbert-Schmidt operator acting from any Hilbert space into ${\mathcal H}$. Under assumption \eqref{Joe2} we get then
\begin{equation}
\label{tobeused}
\text{trace}\,L^*TL \geq 2\|P_1L\|_{HS}\|P_2L\|_{HS}\,,
\end{equation}
where $\|\cdot\|_{HS}$ means the Hilbert-Schmidt norm.
\end{thm}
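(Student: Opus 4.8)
The plan is to deduce Theorem \ref{lelo2} from Theorem \ref{lelo} by exhibiting operators $A$, $B$ on $\mathcal H$ for which the hypothesis \eqref{Joe} holds, then pushing the resulting operator inequality through $L$ and taking traces. First I would set $A = P_1$ and $B = P_2$. These are positive semi-definite rather than strictly positive-definite, which is a minor nuisance I will address by a limiting argument (replace $P_j$ with $P_j + \varepsilon I$, or equivalently note that Theorem \ref{lelo} and its proof go through for non-negative non-zero forms, which is exactly the setting of Theorem \ref{tq1.1}). With this choice, $\sk{Ah}{h} = \nor{P_1h}^2$ and $\sk{Bh}{h} = \nor{P_2h}^2$, so hypothesis \eqref{Joe2} is precisely \eqref{Joe}. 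Theorem \ref{lelo} then furnishes a $\tau>0$ with
\begin{equation*}
\sk{Th}{h} \geq \tau \nor{P_1h}^2 + \tau^{-1}\nor{P_2h}^2, \qquad \forall h \in \mathcal H,
\end{equation*}
i.e.\ the operator inequality $T \geq \tau P_1 + \tau^{-1} P_2$ (using $P_j^* P_j = P_j$).

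Next I would conjugate by $L$. For any Hilbert-Schmidt $L$ from an auxiliary Hilbert space $\mathcal K$ into $\mathcal H$, the operator inequality gives $L^* T L \geq \tau L^* P_1 L + \tau^{-1} L^* P_2 L = \tau (P_1 L)^*(P_1L) + \tau^{-1}(P_2L)^*(P_2L)$ as operators on $\mathcal K$. Since $L$ is Hilbert-Schmidt and $T$, $P_j$ are bounded, all three operators $L^*TL$, $(P_1L)^*(P_1L)$, $(P_2L)^*(P_2L)$ are trace-class; taking traces of both sides and using monotonicity of the trace on self-adjoint trace-class operators yields
\begin{equation*}
\text{trace}\, L^*TL \geq \tau \,\text{trace}\,(P_1L)^*(P_1L) + \tau^{-1}\,\text{trace}\,(P_2L)^*(P_2L) = \tau \|P_1L\|_{HS}^2 + \tau^{-1}\|P_2L\|_{HS}^2.
\end{equation*}
Finally, applying the elementary inequality $\tau a + \tau^{-1} b \geq 2\sqrt{ab}$ with $a = \|P_1L\|_{HS}^2$ and $b = \|P_2L\|_{HS}^2$ gives exactly \eqref{tobeused}.

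I do not expect a serious obstacle here; Theorem \ref{lelo2} really is, as the text says, a particular case packaged with a conjugation-and-trace step. The one point requiring a little care is the degeneracy: $P_1$ and $P_2$ are not positive-\emph{definite}, so strictly speaking Theorem \ref{lelo} as stated does not apply verbatim. The cleanest fix is to observe that Theorem \ref{tq1.1} is stated for non-negative non-zero quadratic forms (which $\nor{P_jh}^2$ certainly are, as $\mathcal H_j$ is nontrivial), run that theorem directly on $\mathcal V = \mathcal H$ with $\s_0[h] = \sk{Th}{h}$, $\s_1[h]=\nor{P_1h}^2$, $\s_2[h]=\nor{P_2h}^2$, and then proceed with the conjugation-by-$L$ and trace steps as above. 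A secondary routine check is that $\|P_jL\|_{HS}^2 = \text{trace}\,(P_jL)^*(P_jL)$ and that trace-class membership and trace monotonicity are legitimate in the (possibly infinite-dimensional) domain of $L$; since $\mathcal H$ is finite-dimensional and $L$ is Hilbert-Schmidt, $P_jL$ has finite rank composed with HS, so everything is trace-class and these manipulations are standard.
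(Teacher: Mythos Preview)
Your argument is correct and is essentially the paper's approach unpacked: the paper applies Theorem~\ref{lelo1} to the conjugated triple $L^*TL$, $L^*P_1L$, $L^*P_2L$, whereas you apply Theorem~\ref{lelo} (equivalently Theorem~\ref{tq1.1}) on $\mathcal H$ to obtain $T\geq \tau P_1+\tau^{-1}P_2$ and then conjugate and take traces---this is exactly the content of Theorem~\ref{lelo1} written out in line. Your handling of the degeneracy of $P_1,P_2$ via Theorem~\ref{tq1.1} is the right fix, and the trace-class checks are fine since $\mathcal H$ is finite-dimensional.
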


\begin{proof}
We apply Theorem \ref{lelo1} to $T$ which is $L^*TL$, and $A:=L^*P_1L$, $B:= L^*P_2L$.
\end{proof}

\vspace{.1in}

Now let us explain how our three-quadratic-form lemma (especially in the form of Theorem \ref{lelo2}) was used in \cite{DV}--\cite{DV4} and \cite{P}.

Let $f,g$ be two compactly supported smooth functions on $\R^n$, and ${\mathcal L}$ is a certain Laplacian (it does not matter what this means). We use Poisson extension $P_t f := e^{-t{\mathcal L}}f$. Consider $v(x,t):= (P_tf, P_tg, P_t|f|^p, P_t|g|^q)$. 
In \cite{DV} a function $B$ on $\R^4$  was constructed which has a following property ($x_{0}$ means variable $t$)
$$
\sum_{i=0}^{n} \Sk{-d^2B(v)
\frac{\pd v}{\pd x_i}}{\frac{\pd v}{\pd x_i}}\geqslant
2\sum_{i=0}^{n} \left |\frac{\pd}{\pd x_i} P_tg(x)\right
| \left |\frac{\pd}{\pd x_i} P_tf(x)\right|\,.
$$
However, this inequality can be strengthened, i.e. it actually self-improves, thanks to Theorem \ref{lelo2}.

Let $Jf, Jg$ denote Jacobians of $f,g$ (all derivatives of $P_tf, P_tg$ with respect to $x_i, i=0,1,...,n$, with $x_0$ being $t$).
By applying Theorem \ref{lelo2} to $T=-d^2B(v)$ and $L=\nabla v(x,t)$, the latter understood as an operator $\R^{n+1}\rightarrow\R^4$, 
we find  that we even have
$$
\sum_{i=1}^{n+1} \Sk{-d^2B(v)
\frac{\pd v}{\pd x_i}(x,t)}{\frac{\pd v}{\pd x_i}(x,t)} 
\geqslant
2\nor{J f(x,t)}_{HS}\nor{J g(x,t)}_{HS}\,.
$$
This inequality immediately gives the following (see \cite{DV})
$$
\int_0^\infty\int_{\R^{n}}\nor{Jf(x,t)}_{HS}\nor{Jg(x,t)}_{HS}\,t\,dx\,dt
\leqslant C(p)\nor{f}_p\nor{g}_q\,.
$$
As a quick consequence one obtains dimension free estimates of Riesz transforms associated with the chosen Laplacian (see \cite{DV}):
\begin{cor}
\label{glavni}
For every $n\in {\mathbb N}$ and every $f\in L^p$, 
$$
\NOR{\Big(\sum_{i=1}^n|R_if|^2\Big)^{1/2}}_p \leqslant C(p)\nor{f}_p
$$
and therefore also $\nor{R_k}_{B(L^p)}\leqslant C(p)$ for
$k=1,\hdots,n$.
\end{cor}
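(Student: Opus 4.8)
The plan is to derive Corollary~\ref{glavni} from the bilinear embedding inequality
$$\int_0^\infty\int_{\R^{n}}\nor{Jf(x,t)}_{HS}\nor{Jg(x,t)}_{HS}\,t\,dx\,dt\leqslant C(p)\nor{f}_p\nor{g}_q$$
established above, via duality together with a Littlewood--Paley--Stein representation of the bilinear form of the vector of Riesz transforms, as in~\cite{DV}. First I would reduce, by the usual density argument, to the case of a smooth, compactly supported $f$, for which all the manipulations below are legitimate; the resulting estimate then passes to all of $L^p$. By duality in $L^p(\R^n;\ell^2)$ one has $\nor{(\sum_{i=1}^n|R_if|^2)^{1/2}}_p=\sup\bigl|\sum_{i=1}^n\sk{R_if}{g_i}\bigr|$, the supremum taken over $\ell^2$-valued $\vec g=(g_1,\dots,g_n)$ with $\nor{(\sum_i|g_i|^2)^{1/2}}_q\leqslant1$, and nice $\vec g$ are dense among these, so it is enough to bound that bilinear form for nice $f$ and $\vec g$.

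The core step is the identity that rewrites this bilinear form as an integral over the half-space $(0,\infty)\times\R^{n}$ of an inner product between the space-time Jacobian of the Poisson extension $P_tf$ and that of $\vec g$. In the model case the Littlewood--Paley--Stein formula $\sk{h_1}{h_2}=c\int_0^\infty\sk{\pd_tP_th_1}{\pd_tP_th_2}\,t\,dt$, applied with $h_1=R_if$ and $h_2=g_i$, combines with the commutation identity $\pd_tP_tR_if=-\pd_{x_i}P_tf$ (valid by the functional calculus, with the analogous identity for the Laplacian under consideration) to give
$$\sum_{i=1}^n\sk{R_if}{g_i}=-c\int_0^\infty\int_{\R^{n}}\sum_{i=1}^n\pd_{x_i}P_tf(x)\,\overline{\pd_tP_tg_i(x)}\,t\,dx\,dt .$$
The boundary contributions at $t=0$ and $t=\infty$ drop out because of the decay and smoothness arranged in the reduction step, and the absolute convergence of this space-time integral is the point that needs the most care.

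A pointwise Cauchy--Schwarz inequality, first in the index $i$ and then in the space-time directions, bounds the integrand by $\nor{Jf(x,t)}_{HS}\,\nor{J\vec g(x,t)}_{HS}$, since $|\nabla_xP_tf|\leqslant\nor{Jf}_{HS}$ and $(\sum_i|\pd_tP_tg_i|^2)^{1/2}\leqslant(\sum_i|\nabla_{x,t}P_tg_i|^2)^{1/2}=\nor{J\vec g}_{HS}$, whence
$$\Big|\sum_{i=1}^n\sk{R_if}{g_i}\Big|\leqslant c\int_0^\infty\int_{\R^{n}}\nor{Jf(x,t)}_{HS}\,\nor{J\vec g(x,t)}_{HS}\,t\,dx\,dt .$$
To close the argument I would invoke the vector-valued form of the embedding inequality: it was obtained by feeding $T=-d^2B(v)$ and $L=\nabla v$ into Theorem~\ref{lelo2}, and Theorem~\ref{lelo2}, being a statement about operators and Hilbert--Schmidt norms, does not distinguish a scalar $g$ from an $\ell^2$-valued $\vec g$; running the same Bellman function reasoning with $g$ replaced by $\vec g$ and the last coordinate of $v$ replaced by $P_t|\vec g|^q$ yields $\int_0^\infty\int_{\R^{n}}\nor{Jf}_{HS}\nor{J\vec g}_{HS}\,t\,dx\,dt\leqslant C(p)\nor{f}_p\nor{\vec g}_{L^q(\ell^2)}\leqslant C(p)\nor{f}_p$ with the \emph{same} constant $C(p)$, which depends on $p$ but not on $n$. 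Combining the last two displays gives $\nor{(\sum_{i=1}^n|R_if|^2)^{1/2}}_p\leqslant cC(p)\nor{f}_p$; and since $|R_kf|\leqslant(\sum_{i=1}^n|R_if|^2)^{1/2}$ pointwise, the operator norm bound $\nor{R_k}_{B(L^p)}\leqslant C(p)$ is immediate. The dimension-free character of all these constants is inherited, through Theorem~\ref{lelo2}, from the fact that the Bellman function lives on $\R^4$ and is oblivious to $n$. I expect the main obstacle to be the second paragraph: setting up the representation identity rigorously --- justifying the commutation relation for the given Laplacian, the vanishing of the boundary terms, and the absolute convergence of the space-time integral --- after which the rest is just Cauchy--Schwarz and the already-established (vector-valued) embedding.
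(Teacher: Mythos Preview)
The paper does not actually prove Corollary~\ref{glavni}: it merely records it as ``a quick consequence'' of the bilinear embedding inequality and refers the reader to \cite{DV} for the argument. Your proposal spells out exactly the route taken in \cite{DV} --- duality in $L^p(\R^n;\ell^2)$, the Littlewood--Paley--Stein representation via the commutation $\pd_t P_t R_i f=-\pd_{x_i}P_t f$, pointwise Cauchy--Schwarz to land on $\nor{Jf}_{HS}\nor{J\vec g}_{HS}$, and the vector-valued bilinear embedding obtained by rerunning the Bellman-function argument with $|\vec g|$ in place of $|g|$ --- so your approach coincides with what the paper is pointing to.

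One small remark: the one genuinely nontrivial step you flag, namely that the embedding inequality holds for $\ell^2$-valued $g$ with the same constant, is indeed the point where Theorem~\ref{lelo2} does real work (since the Hilbert--Schmidt formulation is insensitive to the target dimension), and this is exactly how \cite{DV} proceeds. Your identification of the representation identity and its boundary/convergence issues as the main technical obstacle is accurate; those are handled in \cite{DV} by the density reduction you describe.
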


\noindent{\bf Acknowledgments}. The third author is grateful to Stanislaw Szarek for his calculations that supported our belief in truthfulness of the statement of Theorem \ref{lelo}.

\def\cprime{$'$}
\providecommand{\bysame}{\leavevmode\hbox to3em{\hrulefill}\thinspace}


\end{document}